\documentclass{amsart}
\usepackage{hyperref}
\usepackage[T1]{fontenc}
\usepackage[utf8]{inputenc}
\usepackage{microtype,amsfonts,amssymb,amsthm,tikz-cd}

\newtheorem{proposition}[section]{Proposition}
\newtheorem{lemma}[section]{Lemma}

\theoremstyle{definition}

\newtheorem{instance}[equation]{Example}
\newtheorem{situation}[section]{}

\numberwithin{equation}{section} 
\linespread{1.05}

\author[T.-J.~Lee]{Tsung-Ju~Lee}
\address{Tsung-Ju~Lee: Center of Mathematical Sciences and Applications, 20 Garden St., Cambridge, MA 02138, U.S.A.}
\email{tjlee@cmsa.fas.harvard.edu}
\author[B.~Lian]{Bong~H.~Lian}
\address{Bong~H.~Lian, Department of Mathematics, Brandeis University, Waltham MA 02454, U.S.A.}
\email{lian@brandeis.edu}
\author[D.~Zhang]{Dingxin~Zhang}
\address{Dingxin Zhang, Yau Mathematical Sciences Center, Tsinghua University, Beijing 100084, China}
\email{dingxin@tsinghua.edu.cn}

\thanks{This work is supported by the 
Simons Collaboration Grant on Homological Mirror Symmetry and 
Center of Mathematical Sciences and Applications, Harvard University}
\date{\today}
\title{On a conjecture of Huang--Lian--Yau--Yu}

\begin{document}
\begin{abstract}
We verify a formula on the solution rank
of the tautological system arising from ample
complete intersections in a projective homogeneous space
of a semisimple group conjectured by
Huang--Lian--Yau--Yu~\cite{hlyy}.
As an application, we prove the existence of the rank one point 
for such a system,
where mirror symmetry is expected.
\end{abstract}
\maketitle
\section*{Introduction}
\label{sec:org7ba3867}

In this note we verify a formula conjectured by
Huang--Lian--Yau--Yu~\cite{hlyy}. This is a formula on the solution rank of
a tautological system associated with ``ample complete
intersections'' in a projective homogeneous space of a semisimple group.

\begin{situation}
\label{situation:convention}
\textbf{Conventions.}
\begin{enumerate}
\item We work with \emph{complex} algebraic varieties.
\item The cohomology groups will be taken with respect to sheaves of \emph{complex} vector
spaces, or complexes of sheaves of \emph{complex} vector spaces.
\item The notation \(\mathrm{H}^{m}(A,B)\) is potentially confusing: it can mean
the relative cohomology of \(A\) with respect to \(B\), or 
the sheaf cohomology
of \(A\) with coefficients in \(B\). In the situations below, the reader
should distinguish the use by recalling the previously set up notation.
\end{enumerate}
\end{situation}

\begin{situation}
Let \(X\) be an \(n\)-dimensional, smooth, projective variety
with an action of a connected algebraic group \(G\).
Let \(L_1, \ldots, L_r\) be ample invertible sheaves on
\(X\). For a section \(b_i \in \Gamma(X,L_i)\), define \(Y_{b_i}\) to be
the vanishing scheme of \(b_i\). Define \(V=\prod_{i=1}^{r}\Gamma(X,L_i)\).

When \(r = 1\), and \(L_1 = \omega_X^{-1}\), a \(\mathcal{D}_V\)-module \(\tau\), the
\emph{tautological system}, was introduced by Lian--Song--Yau
\cite{lian-song-yau:period-integrals-and-tautological-systems} in order to study
the periods of Calabi--Yau hypersurfaces in \(X\). We shall not need the precise
definition of tautological systems. The upshot is that among its solutions are ``period
integrals'' \(\int_{\Gamma} \Omega\), where \(\Omega\) is some canonically
defined
differential~\cite{lian-yau:period-inegrals-of-cy-and-general-type-complete-intersections}.
We also mention that tautological systems specialize to the Gelfand--Kapranov--Zelevinsky
\(A\)-hypergeometric systems when \(G\) is a torus.

For general \(r\), set \(\mathbb{P}=\mathrm{Proj}(\mathrm{Sym}^{\bullet}(L_1 \oplus \cdots \oplus L_r))\).
Assume further that \(L_{i}\) are \(G\)-equivariant for all \(i\) and
\(L_1 \otimes \cdots \otimes L_r = \omega_X^{\vee}\). Then
\(\mathbb{P}\) admits an action of \(G\times \mathbb{G}_{\text{m}}^{r-1}\), and
the sheaf \(\mathcal{O}_{\mathbb{P}}(r) = \omega_{\mathbb{P}}^{\vee}\). Hence,
one can define a tautological system for \(\mathbb{P}\). This system is what we refer to
as the tautological system of complete intersections.

\emph{When \(X\) is a projective homogeneous variety of a semisimple 
group \(G\)}, Huang,
Lian, Yau, and Yu were able to give a cohomological interpretation
(see~\eqref{eq:bad} below) of the solution space of this tautological system. They
subsequently formulated the following conjecture, which we shall verify in this
note.
\end{situation}

\begin{situation}
\label{situation:conj}
\textbf{Theorem (Conjecture of Huang--Lian--Yau--Yu).}
\emph{Let notation be as above. Assume that \(X\) is a projective homogeneous variety
of a semisimple group \(G\).}
\emph{The solution space of the tautological system \(\tau\) at a point}
\(b=(b_1,\ldots,b_r)\in V\) \emph{is identified with the homology group}
\begin{equation}
\label{eq:good}
\textstyle\mathrm{H}_{n}\big(X-\bigcup_{i=1}^{r}Y_{b_i}\big).
\end{equation}
\end{situation}

\medskip{}
The proof we present is somehow unrelated to the theory of tautological systems, but uses
the cohomological interpretation of Huang--Lian--Yau--Yu to verify their
conjecture. Huang--Lian--Yau--Yu proved that the said solution space can be
identified with
\begin{equation}
\label{eq:bad}
\mathrm{H}_{n+r-1}(\mathbb{U}_b, \mathbb{U}_b \cap \mathbb{D}),
\end{equation}
where \(\mathbb{U}_b\) is the complement of the hypersurface in \(\mathbb{P}\)
determined by \(b\), and \(\mathbb{D}\) the union of fiber-wise coordinate axes.
Our work is to prove \eqref{eq:good} and \eqref{eq:bad} are naturally isomorphic.
The proof of the isomorphy between these groups uses some common methods in
singularity theory and eventually reduces to a simple problem in combinatorics.

In the last section, we give an application of the main theorem. We prove the
existence of a certain special points, called ``rank one points'' in the moduli
space of Calabi--Yau intersections in a Grassmannian. These are the points where
the solution rank of the Picard--Fuchs equations equals one. That is, all, but
one, period integrals of the canonical differential \(\Omega\) (of the
Calabi--Yau complete intersections) become singular. These points are the
candidates of the so-called ``large radius limits'' in the story of mirror
symmetry.

For the ease of exposition we shall be using cohomology instead of homology.
Everything about homology can be deduced by taking duality.

\medskip{}\noindent
\textbf{Acknowledgment.} We are grateful to Professor Shing-Tung Yau, 
the director of CMSA, for his steady encouragement.
We would like to thank Shuai Wang and Chenglong Yu for useful
communications. We would like to 
thank the anonymous referee for his/her careful reading and useful comments.
We also thank CMSA for providing a pleasant working environment. 
The presented work is supported by the Simons Collaboration Grant on Homological Mirror Symmetry and Applications 2015-2022.
\section*{Some general facts about cohomology}
\label{sec:org164cc14}
\begin{situation}
\label{situation:resolution}
This paragraph discusses how to compute relative cohomology.
The notation we use shall be in agreement of the notation employed later in the
note.
Let \(E\) be a smooth complex manifold.
Let \(D = \bigcup_{i\in I}D_i\) be a simple normal crossing divisor on \(E\).
Let \(i: D \to E\) be the inclusion map.
For \(J \subset I\), define \(D_J = \bigcap_{j\in J} D_{j}\).
Define \(D^{(m)} = \coprod_{\# J=m} D_J\). Let \(i_m: D^{(m)} \to E\) be the natural map.
Then there is a resolution
\begin{equation}
\label{eq:resolution-snc}
i_{\ast}\mathbb{C}_{D} \to [ i_{1\ast}\mathbb{C}_{D^{(1)}} \to i_{2\ast}\mathbb{C}_{D^{(2)}} \to \cdots ]
\end{equation}
Let \(j: U \to E\) be the open embedding of the complement of \(D\). Then there
is an exact sequence
\begin{equation*}
0 \to j_{!}\mathbb{C}_U \to \mathbb{C}_E \to i_{\ast}\mathbb{C}_D \to 0.
\end{equation*}
It follows from~\eqref{eq:resolution-snc} that we have an exact sequence
\begin{equation*}
j_{!}\mathbb{C}_{U} \to \mathbb{C}_E \to i_{1\ast}\mathbb{C}_{D^{(1)}} \to i_{2\ast}\mathbb{C}_{D^{(2)}} \to \cdots.
\end{equation*}

Now let \(u: F \to E\) be a closed embedding of a complex submanifold of \(E\).
Let \(v: U\cap F \to F\) be the inclusion,
and let \(F_i = D^{(i)} \times_{E} F = \coprod_{\#J=i} F \cap D_J\).
As an abuse of notation, we shall denote the induced map \(F_m \to F\) also by
\(i_m\).
By the exactness of \(u^{-1}\) we get an exact sequence
\begin{equation*}
u^{-1}j_{!}\mathbb{C}_U \to \mathbb{C}_{F} \to u^{-1}i_{1\ast} \mathbb{C}_{D^{(1)}} \to \cdots.
\end{equation*}
By proper base change, we know
\begin{equation*}
u^{-1}j_{!}\mathbb{C}_U = v_{!}\mathbb{C}_{U\cap F}.
\end{equation*}
Since \(i_m\) is proper, \(i_{m\ast} = i_{m!}\). By proper base change we also
have \(u^{-1}i_{m\ast} = i_{m\ast}\). we conclude that we have an exact sequence
\begin{equation*}
v_{!}\mathbb{C}_{U\cap F} \to \mathbb{C}_{F} \to i_{1\ast}\mathbb{C}_{F_1} \to \cdots.
\end{equation*}
By definition, \(\mathrm{H}^{m}(F,F\cap D) = \mathrm{H}^{m}(F,v_{!}\mathbb{C}_{U\cap F})\).
Thus we deduce that
\begin{equation*}
\mathrm{H}^{m}(F,F \cap D) =
\mathbb{H}^{m}(F,\mathbb{C}_{F}\to i_{1\ast}\mathbb{C}_{F_1}\to\cdots).
\end{equation*}
\end{situation}

\begin{situation}
\label{situation:bundle-notation}
Let \(B\) be a smooth complex algebraic variety of pure dimension \(n\). Let
\(L_1, \ldots, L_r\) be invertible sheaves on \(B\). Let
\begin{equation*}
\mathbb{P} = \mathrm{Proj}(\mathrm{Sym}^{\bullet}(L_1 \oplus \cdots \oplus L_r))
\end{equation*}
be the projective space bundle associated with the direct sum of the \(L_i\)'s.
Let \(E\) be the geometric vector bundle associated with
\(L_1^{\vee}\oplus\cdots\oplus L^{\vee}_r\). Then \(E - \zeta(B)\), where
\(\zeta:B \to E\) is the zero section, is a \(\mathbb{G}_{\text{m}}\)-torsor on
\(\mathbb{P}\).

Since \(E\) is a direct sum of line bundles, it makes sense to talk about
whether the \(i\)th factor of a point on \(E\) is zero. Let \(D_i\) be the
divisor of \(E\) consisting of these points. The
projection \(D_i \to B\) is the geometric vector bundle associated with the
direct sum \(\bigoplus_{k\neq i}L_k^{\vee}\).

More generally, for a subset \(J \subset \{1,2,\ldots,r\}\), we define
\(D_J = \bigcap_{j\in J} D_j\). The projection \(D_J \to B\) is thus the
geometric vector bundle associated with the direct sum
\(L_J = \bigoplus_{k\notin J} L_k\).
\end{situation}

\begin{situation}
Let the notation be as in~\S\ref{situation:bundle-notation}.
Let \(b=(b_1,\ldots,b_r)\in\mathrm{H}^0(B,\bigoplus_{i=1}^{r}L_i)\) be a
\emph{nonzero} section.
\begin{enumerate}
\item The section \(b\) gives rise to a section of the invertible sheaf
\(\mathcal{O}_{\mathbb{P}}(1)\) on \(\mathbb{P}\). The \emph{nonvanishing locus}
of this section is denoted by \(\mathbb{U}_b \subset \mathbb{P}\). Then
\(\mathbb{U}_b\) is an algebraic variety of pure dimension \(n + r -1\).
\item Let \(\mathbb{D}_i \subset \mathbb{P}\) be the image of \(D_i\).
\item The section \(b\) gives rise to a function \(\varphi_b: E \to
   \mathbb{A}^1\) by fiber-by-fiber pairing:
\[\varphi(b)(\ell_1,\ldots,\ell_r)=\sum_{i=1}^{r}\langle\ell_i,b_i(x)\rangle.\]
Let \(F_b = \varphi_b^{-1}(1)\). We mention in passing that \(F_b\) is an
analogue of the Milnor fiber of a quasi-homogeneous singularity.
\item Let \(E_{i}\) be the closed subvariety of \(F_b\) defined by the vanishing of
\(\ell_i\). Thus the \(E_i\)'s are the intersection of \(F_b\) with a
collection of divisors on \(E\) with normal crossings.
\item Let \(F_1 = \coprod E_i\), \(F_2 = \coprod E_i \cap E_j\), and so on.
\item Let \(Y_{i}\) be the hypersurface in \(B\) defined by the vanishing locus of
\(b_i\). Let \(U_i\) be the complement of \(Y_i\).
Let \(U_b = \bigcup_{i=1}^{r} U_i\)
\end{enumerate}
\end{situation}

\begin{lemma}
The projection \(F_{b} \to \mathbb{P}\) induces an isomorphism
\(F_b \cong \mathbb{U}_b\) of algebraic varieties. Hence the pair
\((F_b, \bigcup E_i)\) is isomorphic to the pair
\((\mathbb{U}_b,\mathbb{U}_b\cap \mathbb{D})\).
\end{lemma}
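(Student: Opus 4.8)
The plan is to realize $F_b$ as a global slice of the $\mathbb{G}_{\mathrm{m}}$-torsor $\pi\colon E-\zeta(B)\to\mathbb{P}$ cut out by the level set $\varphi_b=1$, and to read off both assertions from this description. First I would record two elementary properties of $\varphi_b$. Since the pairing $\langle\ell_i,b_i(x)\rangle$ is linear in $\ell_i$, the function $\varphi_b$ is homogeneous of weight $1$ for the fibrewise scaling action, i.e. $\varphi_b(t\cdot\ell)=t\,\varphi_b(\ell)$ for $t\in\mathbb{G}_{\mathrm{m}}$; and $\varphi_b$ vanishes identically along the zero section $\zeta(B)$. The latter shows $F_b\cap\zeta(B)=\varnothing$, so that $F_b\subset E-\zeta(B)$ and $\pi$ restricts to a genuine morphism $p\colon F_b\to\mathbb{P}$. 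Moreover, regarding $\varphi_b$ as a weight-one equivariant function on the torsor is exactly regarding it as the section of $\mathcal{O}_{\mathbb{P}}(1)$ determined by $b$, so by definition $\mathbb{U}_b=\pi(\{\ell:\varphi_b(\ell)\neq 0\})$ and $p$ factors through $\mathbb{U}_b$.

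Next I would check that $p\colon F_b\to\mathbb{U}_b$ is a bijection by testing it one orbit at a time. A fibre $\pi^{-1}(q)$ is a free orbit $\{t\cdot\ell_0:t\in\mathbb{G}_{\mathrm{m}}\}$, and by homogeneity $\varphi_b(t\cdot\ell_0)=t\,\varphi_b(\ell_0)$ takes the value $1$ for some $t$ if and only if $\varphi_b(\ell_0)\neq 0$, that is, if and only if $q\in\mathbb{U}_b$; and in that case the solution $t=\varphi_b(\ell_0)^{-1}$ is unique. Hence $p^{-1}(q)=\pi^{-1}(q)\cap F_b$ is a single point for every $q\in\mathbb{U}_b$ and is empty otherwise, so $p$ is a bijection onto $\mathbb{U}_b$.

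To upgrade this bijection to an isomorphism of varieties I would exhibit the inverse morphism explicitly rather than appeal to any general principle about bijective morphisms. Define $s\colon\mathbb{U}_b\to E-\zeta(B)$ by $s(q)=\varphi_b(\ell)^{-1}\cdot\ell$ for any lift $\ell\in\pi^{-1}(q)$; homogeneity of $\varphi_b$ makes this independent of the choice of $\ell$. Working over an open subset $W\subset\mathbb{U}_b$ on which $\pi$ admits a section $\tilde\ell$, we may write $s=\varphi_b(\tilde\ell)^{-1}\cdot\tilde\ell$, which is manifestly a morphism since $\varphi_b(\tilde\ell)$ is invertible on $W$. By construction $\varphi_b(s(q))=1$, so $s$ lands in $F_b$, and $p\circ s=\mathrm{id}_{\mathbb{U}_b}$ while $s\circ p=\mathrm{id}_{F_b}$ because $\varphi_b\equiv 1$ on $F_b$. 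This identifies $p$ as an isomorphism $F_b\cong\mathbb{U}_b$.

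Finally, for the isomorphism of pairs I would exploit that each $D_i=\{\ell_i=0\}$ is stable under the scaling action, so $\pi^{-1}(\mathbb{D}_i)=D_i-\zeta(B)$ and therefore $p^{-1}(\mathbb{U}_b\cap\mathbb{D}_i)=F_b\cap D_i=E_i$, the middle equality again using $F_b\cap\zeta(B)=\varnothing$. Since $p$ is an isomorphism this gives $p(E_i)=\mathbb{U}_b\cap\mathbb{D}_i$, and taking the union over $i$ yields $p\big(\bigcup_i E_i\big)=\mathbb{U}_b\cap\mathbb{D}$, which is the asserted isomorphism of pairs. The argument is essentially formal once homogeneity is in hand; the only point requiring genuine care is the algebraicity of the inverse $s$, which is why I would write it in the trivializing form above instead of merely asserting that a bijective morphism of smooth varieties is an isomorphism.
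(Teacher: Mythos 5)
Your proof is correct, but it takes a genuinely different route from the paper's. The paper argues locally: it trivializes the \(L_i\) over open subsets of \(B\), writes \(F_b=\{\sum b_i(x)\ell_i=1\}\subset B\times\mathbb{A}^r\) and \(\mathbb{U}_b=\{\sum b_i(x)u_i\neq 0\}\subset B\times\mathbb{P}^{r-1}\), and observes that the natural map \((x,\ell)\mapsto(x,[\ell])\) is an isomorphism in these coordinates --- which suffices, since the map in question is globally defined and isomorphy can be checked locally. You instead work globally with the \(\mathbb{G}_{\mathrm{m}}\)-torsor \(E-\zeta(B)\to\mathbb{P}\): weight-one homogeneity of \(\varphi_b\) identifies it with the section of \(\mathcal{O}_{\mathbb{P}}(1)\) attached to \(b\), each orbit over a point of \(\mathbb{U}_b\) meets the slice \(\{\varphi_b=1\}\) exactly once, and the inverse \(q\mapsto\varphi_b(\ell)^{-1}\cdot\ell\) is shown to be a morphism by writing it through local sections of the torsor. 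The two arguments rest on the same underlying fact --- a line on which a linear form does not vanish meets the affine hyperplane \(\{\varphi_b=1\}\) in exactly one point --- but your version is coordinate-free and more complete in two respects: it establishes the algebraicity of the inverse rather than leaving the isomorphism as ``easy to see'', and it explicitly derives the isomorphism of pairs \((F_b,\bigcup E_i)\cong(\mathbb{U}_b,\mathbb{U}_b\cap\mathbb{D})\) from the scaling-invariance of the divisors \(D_i\), a point the paper's proof passes over in silence. The mild cost is that you must invoke the standard dictionary between weight-one functions on the torsor and sections of \(\mathcal{O}_{\mathbb{P}}(1)\), which the purely local computation never needs.
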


\begin{proof}
The problem being local, we can assume that the \(L_i\)'s are trivial bundle on
\(B\). Then the \(b_i\)'s are given by a collection of regular functions on
\(B\). In this case, \(F_b\) is given by
\begin{equation*}
\left\{(x,\ell_1,\ldots,\ell_r) \in B \times \mathbb{A}^r: \sum b_i(x)\ell_i = 1\right\}
\end{equation*}
whereas
\begin{equation*}
\mathbb{U}_b = \left\{(x, [u_1,\ldots,u_r]) \in B \times \mathbb{P}^{r-1} : \sum b_i(x) u_i \neq 0\right\}.
\end{equation*}
It is easy to see then that the natural map \(F_b \to \mathbb{U}_b\) sending
\((x,\ell_1,\ldots,\ell_r)\) to \((x,[\ell_1,\ldots,\ell_r])\) is well-defined
and is an isomorphism of algebraic varieties.
\end{proof}

The lemma above shows that the validity of Theorem~\ref{situation:conj}
will follow from the following proposition.

\begin{proposition}
\label{proposition:isomorphy}
There is an isomorphism
\begin{equation*}
\mathrm{H}^{m}(F_b, {\textstyle\bigcup E_i}) \cong 
\mathrm{H}^{m-r+1}(U_1\cap\cdots\cap U_r).
\end{equation*}
\end{proposition}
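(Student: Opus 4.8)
The plan is to prove the isomorphism by pushing the relative cohomology complex of the pair $(F_b,\bigcup E_i)$ forward along the bundle projection and reducing everything to the base $B$. Let $\pi\colon F_b\to B$ be the restriction of the projection $E\to B$, and for $J\subseteq\{1,\dots,r\}$ put $E_J=\bigcap_{j\in J}E_j$, so that $F_m=\coprod_{|J|=m}E_J$. The fibre of $E_J\to B$ over $x$ is the affine space $\{\ell:\ell_j=0\ (j\in J),\ \sum_{i\notin J}b_i(x)\ell_i=1\}$, which is empty when $b_i(x)=0$ for all $i\notin J$ and is nonempty otherwise; hence $E_J\to B$ is an affine bundle over the open set $\bigcup_{i\notin J}U_i$ and is empty elsewhere. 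I would apply $R\pi_*$ to the resolution
\[K^\bullet=\big[\mathbb{C}_{F_b}\to i_{1*}\mathbb{C}_{F_1}\to i_{2*}\mathbb{C}_{F_2}\to\cdots\big]\]
of \S\ref{situation:resolution} and then read off $\mathrm{H}^\bullet(F_b,\bigcup E_i)=\mathbb{H}^\bullet(B,R\pi_*K^\bullet)$.

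First I would compute $R\pi_*$ termwise. Since $E_J\to\bigcup_{i\notin J}U_i$ is an affine bundle, its fibres are cohomologically trivial, and homotopy invariance identifies the pushforward of the summand $\mathbb{C}_{E_J}$ of $i_{|J|*}\mathbb{C}_{F_{|J|}}$ with $Rj_{J*}\mathbb{C}_{\bigcup_{i\notin J}U_i}$, where $j_J\colon\bigcup_{i\notin J}U_i\hookrightarrow B$ is the open inclusion. Assembling these identifications over all $J$ exhibits $R\pi_*K^\bullet$ as the totalization of the cube $J\mapsto Rj_{J*}\mathbb{C}_{\bigcup_{i\notin J}U_i}$ and produces a spectral sequence
\[E_1^{p,q}=\bigoplus_{|J|=p}\mathrm{H}^q\big(\textstyle\bigcup_{i\notin J}U_i\big)\ \Longrightarrow\ \mathrm{H}^{p+q}\big(F_b,{\textstyle\bigcup E_i}\big).\]

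The heart of the argument is to show that this cube is a resolution of the constant sheaf on $U_1\cap\cdots\cap U_r$. The fibrewise geometry indicates why this set appears: after the rescaling $t_i=b_i(x)\ell_i$ the fibre of the pair $(F_b,\bigcup E_i)$ over $x$ splits as a product of an affine simplex pair in the coordinates with $b_i(x)\neq0$ and a coordinate-cross pair $(\mathbb{A}^{s},\bigcup\{\ell_i=0\})$ in the coordinates with $b_i(x)=0$; as $(\mathbb{A}^1,\{0\})$ has vanishing relative cohomology, the Künneth formula for pairs annihilates the fibre unless every $b_i(x)\neq0$, in which case it is one dimensional. Promoting this pointwise vanishing to a quasi-isomorphism of the totalized cube with $\mathbb{C}_{U_1\cap\cdots\cap U_r}$ is an inclusion-exclusion statement about the unions $\bigcup_{i\notin J}U_i$ --- the ``simple problem in combinatorics'' alluded to in the introduction. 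Once this is done, the spectral sequence degenerates onto a single line and yields the isomorphism $\mathrm{H}^m(F_b,\bigcup E_i)\cong\mathrm{H}^m(U_1\cap\cdots\cap U_r)$ asserted in the Proposition.

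The main obstacle is exactly this global collapse. Because $\pi$ is not proper, the termwise pushforwards are $Rj_*$-type objects rather than genuine fibre cohomologies, so the vanishing of the fibre relative cohomology away from $U_1\cap\cdots\cap U_r$ has to be upgraded to an acyclicity of the entire totalized cube; the delicate point is to control how the fibre cohomology jumps as $x$ crosses the loci where individual sections $b_i$ vanish, that is, as the set $\{i:b_i(x)\neq0\}$ changes. Once the cube is identified as a resolution of the constant sheaf on $U_1\cap\cdots\cap U_r$, the degeneration of the spectral sequence and the identification of the two groups are formal.
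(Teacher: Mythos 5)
Your reduction coincides with the first half of the paper's proof: you push the resolution of \S\ref{situation:resolution} forward along the projection, use the affine-bundle statement of Lemma~\ref{lemma:affine-bundle} termwise to identify each summand with $Rj_{\overline{J}\ast}\mathbb{C}_{U_{\overline{J}}}$, and arrive at the complex~\eqref{eq:pseudo-mv} (that you push to $B$ rather than $U_b$ is immaterial). But you stop exactly where the real work begins. The claim you describe as ``an inclusion-exclusion statement'' which is ``formal'' once granted --- that the totalized cube is quasi-isomorphic to $R\iota_{\ast}\mathbb{C}_{U_1\cap\cdots\cap U_r}[1-r]$ --- is precisely Proposition~\ref{proposition:final}, and it occupies the entire second half of the paper's argument; it is not bookkeeping. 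Nor can your fibrewise K\"unneth vanishing be promoted to it: since $\pi$ is not proper (as you concede), the stalk of $Rj_{\overline{J}\ast}\mathbb{C}_{U_{\overline{J}}}$ at $x$ is $\varinjlim_{V\ni x}\mathrm{H}^{\bullet}(V\cap U_{\overline{J}})$, and $V\cap U_{\overline{J}}$ is again a union of $r-\#J$ open sets; so verifying the quasi-isomorphism stalkwise at a point where some $b_i$ vanish is literally the same assertion for the traces $V\cap U_1,\ldots,V\cap U_r$ on small neighborhoods --- the heuristic is circular unless an independent argument is supplied. Your proposal explicitly flags this as ``the main obstacle'' and leaves it open, so what you have is a correct reduction, not a proof.

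For comparison, the paper closes this gap by induction on $r$ (bases $r=2,3$ in Examples~\ref{example:requal2} and~\ref{example:requal3}): one repeatedly forms quotient complexes along Mayer--Vietoris short exact sequences of the shape $(\vec{a},r-1,r)\to(\vec{a},r-1)+(\vec{a},r)\to(\vec{a},r-1\cap r)$, which eliminates every term containing both indices $r-1$ and $r$ and replaces the pair $U_{r-1},U_r$ by the single open set $U_{r-1}\cap U_r$; the induction hypothesis applied to $U_1\cup\cdots\cup U_{r-2}\cup(U_{r-1}\cap U_r)$ then concludes. Some such device (equivalently, an induction on the triangle $Rj_{A\cup B\ast}\to Rj_{A\ast}\oplus Rj_{B\ast}\to Rj_{A\cap B\ast}\xrightarrow{+1}$ for open sets $A,B$) is what your write-up is missing. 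One further point to fix when you do complete the argument: the shift $[1-r]$ means your spectral sequence yields $\mathrm{H}^{m}(F_b,\bigcup E_i)\cong\mathrm{H}^{m-r+1}(U_1\cap\cdots\cap U_r)$, the degree shift being exactly what reconciles \eqref{eq:bad} in degree $n+r-1$ with \eqref{eq:good} in degree $n$; your final sentence, which asserts the isomorphism in the same degree $m$ on both sides, should record this.
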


The first step of the proof the proposition needs the explicit form of Caylay's
trick.

\begin{lemma}
\label{lemma:affine-bundle}
The projection \(F_b \to U_b\) is an \(\mathbb{A}^{r-1}\)-bundle.
In particular, \(F_b\) and \(U_b\) have the same homotopy type.
\end{lemma}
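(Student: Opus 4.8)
The plan is to realize $F_b$, over its image in $B$, as a fiberwise-affine subbundle of $E$, and to extract local triviality directly from the defining equation; this is the explicit form of Cayley's trick alluded to above. First I would pin down the image of the projection $\pi\colon F_b \to B$. Intrinsically, the function $\varphi_b$ restricts on each fiber $E_x$ to the linear functional determined by $b(x)$, regarded as an element of $(\bigoplus_{i} L_i)_x = E_x^{\vee}$. This functional is nonzero exactly when $b(x)\neq 0$, that is, when $x$ lies outside $\bigcap_i Y_i$; since $U_b = \bigcup_i U_i = B - \bigcap_i Y_i$, the image of $\pi$ is precisely $U_b$. Over a point $x \in U_b$ the fiber $\pi^{-1}(x) = \varphi_b^{-1}(1)\cap E_x$ is a coset of the hyperplane $\ker(b(x)) \subset E_x$, hence an affine space isomorphic to $\mathbb{A}^{r-1}$.

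Next I would establish local triviality using the cover of $U_b$ by the opens $U_i = \{b_i \neq 0\}$. In the local trivialization of the preceding lemma, where the $L_i$ are trivial and the $b_i$ are regular functions, $F_b$ is cut out by $\sum_k b_k \ell_k = 1$. Over $U_i$ this equation solves uniquely for $\ell_i$, namely $\ell_i = b_i^{-1}\big(1 - \sum_{k\neq i} b_k \ell_k\big)$, so that $(x,\ell)\mapsto (x,(\ell_k)_{k\neq i})$ is an isomorphism $F_b|_{U_i} \cong U_i \times \mathbb{A}^{r-1}$. The transition functions between two such charts are affine-linear in the fiber coordinates; equivalently, $F_b|_{U_b}$ is a torsor under the rank $(r-1)$ vector bundle $\ker\big(b\colon \bigoplus_i L_i^{\vee} \to \mathcal{O}_{U_b}\big)$. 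In particular $\pi\colon F_b \to U_b$ is a Zariski-locally trivial $\mathbb{A}^{r-1}$-bundle.

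It then remains to pass from the bundle structure to the statement about homotopy type. In the analytic topology $\pi$ is a locally trivial fibration with contractible fiber $\mathbb{C}^{r-1}$, so the long exact sequence of homotopy groups shows $\pi$ is a weak homotopy equivalence; as both spaces have the homotopy type of CW complexes, $\pi$ is a genuine homotopy equivalence. Equivalently, the higher direct images $R^q\pi_{\ast}\mathbb{C}$ vanish for $q>0$ and $R^0\pi_{\ast}\mathbb{C}=\mathbb{C}$, so the Leray spectral sequence yields $\mathrm{H}^{\bullet}(F_b)\cong\mathrm{H}^{\bullet}(U_b)$, which is what is needed downstream. I expect the only genuinely delicate point to be the very first one: correctly identifying the base as $U_b = B - \bigcap_i Y_i$ rather than all of $B$, since it is exactly over this locus that the fiberwise functional $\varphi_b$ is surjective and the affine-bundle structure exists. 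Once this is settled, the remaining verifications are routine.
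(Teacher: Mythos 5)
Your proof is correct and follows essentially the same route as the paper's: after locally trivializing the \(L_i\), one solves the equation \(\sum_k b_k\ell_k=1\) for \(\ell_i\) over each \(U_i\), producing the charts \(U_i\times\mathbb{A}^{r-1}\) that cover \(F_b\). Your additions—identifying the image of the projection as \(U_b\), the intrinsic description of \(F_b\) as a torsor under \(\ker\bigl(b\colon\bigoplus_i L_i^{\vee}\to\mathcal{O}_{U_b}\bigr)\), and the explicit homotopy-equivalence argument—are elaborations of points the paper leaves implicit, not a different method.
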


\begin{proof}
The problem being local, we can assume the \(L_i\)'s are trivial. Then
\(F_b\) is defined by \(\sum b_i(x)\ell_i = 1\). If \(U_{k}\) is the open
subset of \(X\) consisting of \(x \in B\) such that \(b_k(x) \neq 0\). Then
\(U_{k} \subset U_b\) and we can define an open embedding
\begin{equation*}
\varphi_k : U_k \times \mathbb{A}^{r-1} \to F_b
\end{equation*}
by
\begin{equation*}
(x,\ell_1,\ldots,\widehat{\ell_k},\ldots,\ell_r) \mapsto
\left(x, \ell_1, \cdots, \ell_{k-1}, \frac{1 - \sum_{i\neq k} b_i(x)\ell_i}{b_{k}(x)},\ell_{k+1},\ldots,\ell_r \right).
\end{equation*}
Clearly the union of the images of \(\varphi_k\) equals \(F_b\). This completes
the proof.
\end{proof}

Now we turn to the proof of Proposition~\ref{proposition:isomorphy}. \refstepcounter{section}
\section*{The proof}
\label{sec:org89a7431}

\noindent
\textbf{\thesection.}
In the sequel, in order to avoid using the notion of ``complexes in the derived
category'', we shall insist on working with the abelian category of \emph{complex of
sheaves}. Cohomology groups are taken only in the last step. We shall also now
employ the notation of \S\ref{situation:resolution}, with \(F = F_b\) (the other
notation are compatible with the situation of \S\ref{situation:resolution}). The
groups \(\mathrm{H}^{\bullet}(F_b,\bigcup E_i)\) are the hypercohomology groups
of the complex
\begin{equation*}
\mathbb{C}_{F} \to i_{1\ast}\mathbb{C}_{F_1} \to i_{2\ast} \mathbb{C}_{F_2} \to \cdots.
\end{equation*}
Let \(\pi: F_b \to U_b\) be the natural projection, which is a smooth,
affine morphism that is a torsor of a vector bundle. Then the above
hypercohomology can be computed on \(U_b\) by taking direct images:
\begin{equation}
\label{eq:direct-image-q}
R\pi_{\ast}\mathbb{C}_F \to R\pi_{\ast} i_{1\ast} \mathbb{C}_{F_1} \to R\pi_{\ast}i_{2\ast}\mathbb{C}_{F_2} \to \cdots.
\end{equation}
Again, we recapitulate that the items
\begin{equation}
\label{eq:complex-vs-derivedcat}
R\pi_{\ast} i_{k\ast} \mathbb{C}_{F_k}
\end{equation}
are to be thought as complexes of sheaves, and the above displayed equation
should be thought as a double complex, or a complex in the abelian category of
complexes of sheaves. Precisely, we fix an injective resolution \(I^{\bullet}\)
of \(\mathbb{C}_{F_i}\) and the item~\eqref{eq:complex-vs-derivedcat} is
regarded as the complex \(\pi_{\ast}I^{\bullet}\).

Let \(J\) be a subset of \(\{1,2,\ldots,r\}\).
Let \(\overline{J}\) be the complement of \(J\) in \(\{1,2,\ldots,r\}\).
Let \(U_{J}=\bigcup_{k\in J} U_k\). Then we know from
Lemma~\ref{lemma:affine-bundle} that
\begin{equation*}
E_{i_1}\cap\cdots \cap E_{i_s} \to U_{\overline{\{i_1,\ldots,i_s\}}}
\end{equation*}
is an affine space bundle, hence the direct image of the fixed injective
resolution of \(\mathbb{C}_{E_{i_1}\cap\cdots \cap E_{i_s}}\) becomes an
injective resolution of \(\mathbb{C}_{U_{\overline{\{i_1,\ldots,i_s\}}}}\).
It follows that the complex~\eqref{eq:direct-image-q} is of the form
\begin{equation}
\label{eq:pseudo-mv}
\mathbb{C}_{U_b} \to \bigoplus_{\#J=1} Rj_{\overline{J}\ast}\mathbb{C}_{U_{\overline{J}}}
\to \bigoplus_{\#J=2} Rj_{\overline{J}\ast}\mathbb{C}_{U_{\overline{J}}} \to \cdots \to Rj_{1\ast}\mathbb{C}_{U_1}\oplus \cdots \oplus Rj_{r\ast}\mathbb{C}_{U_r}
\end{equation}
where \(j_{J}\) is the inclusion of \(U_J\) into \(U_b\).

We have reduced our problem to a topological one. Thus we will work in the category of
locally compact, Hausdorff, topological spaces.
The situation is that we are given an open covering
\(U_b = U_1 \cup \cdots \cup U_r\) of topological spaces, and
let \(\iota: U_1 \cap \cdots \cap U_r \to U_b\) be the open immersion of the
deepest intersection. Proving Proposition~\ref{proposition:isomorphy} reduces
to proving the (double) complex~\eqref{eq:pseudo-mv} and the complex
\(R\iota_{\ast}\mathbb{C}_{U_1 \cap \cdots \cap U_r}[1-r]\) have the same
hypercohomology.

\medskip{}\noindent
\textbf{Notation.}
For the later combinatorial manipulation, from now on we use
\((i_1,\ldots,i_m)\) to denote the complex
\begin{equation*}
Rj_{i_1,\ldots,i_m\ast}\mathbb{C}_{U_{i_1,\ldots,i_m}}.
\end{equation*}
More generally, let
\begin{equation*}
(i^{(1)}_1 \cap \cdots \cap i^{(1)}_{s_1}, i^{(2)}_1 \cap \cdots \cap i^{(2)}_{s_2} ,
\ldots, i^{(m)}_m \cap \cdots \cap i^{(m)}_{s_m})
\end{equation*}
be the direct image, to \(U_b\), of the constant sheaf on the open subset
\begin{equation*}
(U_{i^{(1)}_1}\cap \cdots\cap U_{i^{(1)}_{s_1}}) \cup \cdots \cup (U_{i^{(m)}_{1}} \cap\cdots \cap U_{i^{(m)}_{s_m}}).
\end{equation*}
Moreover, we shall use the usual addition to denote direct sum. Therefore, our
final task is to prove the following exercise in topology.

\begin{proposition}
\label{proposition:final}
The (double) complex
\begin{equation*}
\textstyle
(1,\ldots,r) \to \sum_i (\overline{i}) \to \sum_{i < j} (\overline{i,j}) \to \cdots \to \sum_i (i)
\end{equation*}
admits a chain map to \((1\cap \cdots \cap r)[-(r-1)]\) that is a quasi-isomorphism.
\end{proposition}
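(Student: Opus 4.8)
The plan is to first exhibit the chain map and then verify that it is a quasi-isomorphism by induction on $r$. Write $T(U_1,\ldots,U_k)$ for the complex built from a family of $k$ open subsets exactly as in \eqref{eq:pseudo-mv}, so that the assertion to be proved is $\phi\colon T(U_1,\ldots,U_r)\xrightarrow{\sim} R\iota_{\ast}\mathbb{C}_{U_1\cap\cdots\cap U_r}[-(r-1)]$, with $\iota$ the inclusion of the deepest intersection. The only reasonable candidate $\phi$ is concentrated in the top degree $r-1$: on the term $\sum_i(i)=\bigoplus_i Rj_{i\ast}\mathbb{C}_{U_i}$ it is the alternating sum of the restriction morphisms $Rj_{i\ast}\mathbb{C}_{U_i}\to R\iota_{\ast}\mathbb{C}_{U_1\cap\cdots\cap U_r}$ coming from $U_1\cap\cdots\cap U_r\subseteq U_i$. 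First I would check that this is a chain map, i.e.\ that its composite with the last differential $\sum_{a<b}(a,b)\to\sum_i(i)$ vanishes; this is immediate, since the two restrictions of $U_a\cup U_b$ to the full intersection, one through $U_a$ and one through $U_b$, coincide and enter with opposite signs. This is the first, purely formal, combinatorial input.

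To prove that $\phi$ is a quasi-isomorphism I would induct on $r$, splitting the poset of subsets according to whether the index $r$ occurs. The subcomplex $Q$ indexed by $S\subseteq\{1,\ldots,r-1\}$ is, up to a shift by one, the complex $T(U_1,\ldots,U_{r-1})$, while the quotient $P$ indexed by the sets containing $r$ is the complex $T(U_1\cup U_r,\ldots,U_{r-1}\cup U_r)$ augmented at the bottom by a single copy of $Rj_{r\ast}\mathbb{C}_{U_r}$; the differential of \eqref{eq:pseudo-mv} presents $T(U_1,\ldots,U_r)$ as the fibre of the index-deleting restriction $c\colon P\to Q[1]$. By the inductive hypothesis the two smaller complexes are $R\iota_{\ast}\mathbb{C}_{U_1\cap\cdots\cap U_{r-1}}[-(r-2)]$ and, using $\bigcap_{i<r}(U_i\cup U_r)=(\bigcap_{i<r}U_i)\cup U_r$, the object $R\iota_{\ast}\mathbb{C}_{(U_1\cap\cdots\cap U_{r-1})\cup U_r}[-(r-2)]$.

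The homological heart is then the two-term Mayer--Vietoris triangle for the cover $\{\,U_1\cap\cdots\cap U_{r-1},\,U_r\,\}$, whose union is $(U_1\cap\cdots\cap U_{r-1})\cup U_r$ and whose intersection is $U_1\cap\cdots\cap U_r$: it identifies $R\iota_{\ast}\mathbb{C}_{U_1\cap\cdots\cap U_r}$ with the cone of $\mathbb{C}_{(U_1\cap\cdots\cap U_{r-1})\cup U_r}\to Rj_{\ast}\mathbb{C}_{U_1\cap\cdots\cap U_{r-1}}\oplus Rj_{r\ast}\mathbb{C}_{U_r}$. Feeding the two inductive identifications into the triangle $Q\to T(U_1,\ldots,U_r)\to P$ and reorganising by the octahedral axiom should collapse the whole thing onto $R\iota_{\ast}\mathbb{C}_{U_1\cap\cdots\cap U_r}[-(r-1)]$, the composite isomorphism being induced by $\phi$. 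I expect the main obstacle to be bookkeeping rather than geometry: one must verify that, under the inductive isomorphisms, the structural map $c$ and the augmentation in $P$ really correspond to the geometric restrictions among the various intersections -- the naturality of the Mayer--Vietoris triangle in the family -- and keep all shifts and signs aligned. This is precisely what makes the argument ``a simple problem in combinatorics'': equivalently, after passing to the Verdier dual, where each $Rj_{\ast}$ is replaced by the extension by zero $j_{!}$ with transparent stalks, the cone of $\phi$ at a point $x$ becomes the reduced chain complex of the simplex on $\{1,\ldots,r\}$ modulo the subcomplex of faces disjoint from $S_x=\{i:x\in U_i\}$; both are acyclic, so the cone is acyclic except when $S_x=\{1,\ldots,r\}$, that is exactly when $x\in U_1\cap\cdots\cap U_r$, which is nothing but the contractibility of a simplex.
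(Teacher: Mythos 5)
Your proof is correct, and it takes a genuinely different route from the paper's. The paper proves the proposition by eliminating terms from left to right \emph{inside} the complex: Mayer--Vietoris for the covering $U_b=(U_1\cup\cdots\cup U_{r-1})\cup(U_1\cup\cdots\cup U_{r-2}\cup U_r)$ kills the leftmost term, and the exact sequences $(\vec a,r-1,r)\to(\vec a,r-1)+(\vec a,r)\to(\vec a,\,r-1\cap r)$ are then used repeatedly to replace every occurrence of the two separate indices $r-1,r$ by the single merged index $r-1\cap r$; the outcome is the analogous complex for the $r-1$ opens $U_1,\ldots,U_{r-2},U_{r-1}\cap U_r$, whose deepest intersection is still $U_1\cap\cdots\cap U_r$, and a \emph{single} application of the inductive hypothesis concludes. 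You instead filter by occurrence of the index $r$, obtaining $0\to Q\to T\to P\to 0$ with $Q[1]=T(U_1,\ldots,U_{r-1})$ and $P$ the augmented $T(U_1\cup U_r,\ldots,U_{r-1}\cup U_r)$, apply the inductive hypothesis \emph{twice} (to two different families of $r-1$ opens), and close with the two-set Mayer--Vietoris for the cover $\{\bigcap_{i<r}U_i,\,U_r\}$. The engine is the same (Mayer--Vietoris plus induction on $r$), but the decompositions are genuinely different, and your variant needs a mildly stronger inductive statement: not just that some quasi-isomorphism exists, but that the \emph{canonical} map $\phi$ is one, naturally in the family, so that under the inductive identifications the connecting map $c$ and the augmentation of $P$ become the geometric restriction maps. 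You flag exactly this, and it is indeed routine, because your $T$ is \emph{literally} the chain-level fibre of $c$ and all the relevant squares commute at the chain level; the five lemma then yields an honest chain-map quasi-isomorphism, which is all the proposition asks for. What your route buys: the comparison map is exhibited explicitly and canonically (in the paper it arises only as a composite of quotient maps), and the octahedron formulation makes the bookkeeping transparent. What the paper's route buys: it stays entirely inside the abelian category of complexes --- quotients by acyclic subcomplexes --- consistent with the authors' announced intention to avoid derived-category language. One caveat on your closing remark: the Verdier-dual stalkwise argument (the cone of $\mathbb{D}\phi$ at $x$ being the relative chain complex of a simplex modulo a face, both contractible) is the most conceptual proof, but it relies on biduality and on the transparent stalks of $j_{!}$, hence on constructibility; that is automatic for the algebraic varieties of the application, but not for the arbitrary locally compact Hausdorff spaces for which the paper states the proposition, so it should remain a remark while your induction, which is valid in that full generality, carries the proof.
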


The proof is by induction. We first deal with the cases when \(r = 2\) and
\(r=3\), which are also bases of the inductive proof.

\begin{instance}
\label{example:requal2}
Assume \(r = 2\).
Then \(U_b = U_1 \cup U_2\).
The complex~\eqref{eq:direct-image-q} is
\begin{equation*}
(1,2) \to (1) + (2)
\end{equation*}
But this is clearly quasi-isomorphic to the complex \((1\cap 2)[-1]\), by the
Mayer--Vietories principle.
\end{instance}

\begin{instance}
\label{example:requal3}
Assume that \(r = 3\). Then the complex~\eqref{eq:direct-image-q} is
\begin{equation}
\label{eq:case3}
(1,2,3) \to (1,2) + (1,3) + (2,3) \to (1) + (2) + (3).
\end{equation}
Consider
\begin{equation*}
\begin{tikzcd}
&  (2,3)\ar[d]\ar[r,equal]  &(2,3)  \ar[d] &  \\
(1,2,3) \ar[r] & (1,2) + (1,3) + (2,3) \ar[r] & (1) + (2) + (3)
\end{tikzcd}
\end{equation*}
where the left vertical arrow is given by the inclusion and 
the right vertical one is given by the Mayer--Vietories of
\(U_2 \cup U_3\) with respect to the covering \(U_2\) and
\(U_3\). Taking the quotient complex yields a complex quasi-isomorphic
to~\eqref{eq:case3}, which is
\begin{equation*}
(1,2,3) \to (1,2) + (1,3) \to (1) + (2\cap 3).
\end{equation*}
As a second step, we consider the item \((1,2,3)\) on the left. 
Form the diagram
\begin{equation*}
\begin{tikzcd}
(1,2,3) \ar[r,equal] \ar[d,equal] & (1,2,3) \ar[d] & \\
(1,2,3) \ar[r] & (1,2)+(1,3) \ar[r] & (1) + (2\cap 3).
\end{tikzcd}
\end{equation*}
The right vertical arrow being induced by the Mayer--Vietories of
\(U_1 \cup U_2\cup U_{3}\) with respect to the covering \(U_{1}\cup U_{2}\)
abd \(U_{1}\cup U_{3}\). Thus taking the
quotient yields a quasi-isomorphic complex which is
\begin{equation*}
0 \to (1,2\cap 3) \to (1) + (2\cap 3).
\end{equation*}
By the Mayer--Vietories for the space \(U_1 \cup (U_2 \cap U_3)\), this complex
equals \((1\cap 2\cap 3)[-2]\). We win.
\end{instance}

\begin{proof}[Proof of Proposition~\ref{proposition:final}]
The pattern can already be seen in
the above example when \(r =3\): we modify, from right to left, extra terms.
Starting with the complex
\begin{equation*}
(1,2,\ldots,r) \to \sum_{i=1}^{r} ( 1,\ldots,\widehat{i},\ldots,r) \to \cdots
\to \sum_{1\le i<j\le r} (i,j) \to \sum_{i=1}^{r} (i),
\end{equation*}
we first modify the right-most item by considering
\begin{equation*}
\begin{tikzcd}
& & (r-1,r)\ar[d] \ar[r,equal] & (r-1,r) \ar[d] & \\
&\cdots \ar[r] &\sum_{1\le i<j\le r} (i,j) \ar[r] &\sum_{i=1}^{r} (i).
\end{tikzcd}
\end{equation*}
Using the Mayer--Vietories for \(U_{r-1}\cap U_{r}\)
we see the quotient complex, which is quasi-isomorphic to the started one, is of
the form
\begin{equation*}
C_1: (1,\ldots,r) \to \cdots \to \sum_{1\le i<j\le r-2} (i,j)+ 
\sum_{i=1}^{r-2} [(i,r-1)+(i,r)] \to \sum_{i}^{r-2} (i) + (r-1\cap r).
\end{equation*}
Now look at the third from the last term in \(C_{1}\);
namely \(\sum_{1\le i<j<k\le r} (i,j,k)\), and
let \(Q_{1} = \sum_{1\le i\le r-2} (i,r-1,r)\) which 
contains those terms where both indices \(r-1\) and \(r\)
appear. We form the diagram
\begin{equation*}
\begin{tikzcd}[column sep=1em]
& & Q_{1}\ar[d] \ar[r,equal] & Q_{1} \ar[d] & & \\
&\cdots \ar[r] &\displaystyle\sum_{1\le i<j<k\le r} (i,j,k) \ar[r] &
\displaystyle\sum_{1\le i<j\le r-2} (i,j)+ 
\sum_{i=1}^{r-2}[(i,r-1)+(i,r)]\ar[r] &\cdots. 
\end{tikzcd}
\end{equation*}
Each summand of \(Q_1\), say \((i,r,r-1)\), fits into an exact sequence
\begin{equation*}
(i,r,r-1) \to (i,r) + (i,r-1) \to (i, r\cap r-1).
\end{equation*}
Using these sequences we can replace the complex \(C_1\) by its quasi-isomorphic
quotient \(C_2\), and eliminate all the terms appearing in \(Q_1\). The resulting complex
is 
\begin{align*}
C_2: (1,\ldots,r) &\to \cdots \to \sum_{1\le i<j<k\le r-2}(i,j,k)+
\sum_{1\le i<j\le r-2}[(i,j,r-1)+ (i,j,r)]\\ 
&\to \sum_{1\le i<j\le r-2}(i,j) + \sum_{1\le i\le r-2} (i,r-1\cap r)\to 
\sum_{i}^{r-2} (i) + (r-1\cap r).
\end{align*}
Next we eliminate the term
\begin{equation*}
Q_{2} = \sum_{1\le i<j\le r-2} (i,j,r-1,r)
\end{equation*}
in the forth from the last term in \(C_{2}\). We then achieve the complex
\begin{align*}
C_3: (1,\ldots,r) &\to \cdots \to \sum_{1\le i<j<k<l\le r-2}(i,j,k,l)+
\sum_{1\le i<j<k\le r-2} [(i,j,k,r-1)+(i,j,k,r)]\\
&\to \sum_{1\le i<j<k\le r-2}(i,j,k)+
\sum_{1\le i<j\le r-2}(i,j,r-1\cap r)\\ 
&\to \sum_{1\le i<j\le r-2}(i,j) + \sum_{1\le i\le r-2} (i,r-1\cap r)\to 
\sum_{i}^{r-2} (i) + (r-1\cap r).
\end{align*}
Inductively, we can finally achieve the complex
\begin{align*}
C_{r-1}: 0 &\to (1,\ldots,r-2,r-1\cap r)\\
&\to\cdots\\
&\to \sum_{1\le i<j<k<l\le r-2}(i,j,k,l)+
\sum_{1\le i<j<k\le r-2} (i,j,k,r-1\cap r)\\
&\to \sum_{1\le i<j<k\le r-2}(i,j,k)+
\sum_{1\le i<j\le r-2}(i,j,r-1\cap r)\\ 
&\to \sum_{1\le i<j\le r-2}(i,j) + \sum_{1\le i\le r-2} (i,r-1\cap r)\\
&\to 
\sum_{1\le i\le r-2} (i) + (r-1\cap r).
\end{align*}
By induction on \(r\), the complex \(C_{r-1}\) is the complex
associated with the the covering \(U_{1},\ldots,U_{r-2},U_{r-1}\cap U_{r}\) shifted by \(1\),
which by induction hypothesis computes 
\begin{equation*}
U_{1}\cap\cdots \cap U_{r}[-(r-2)-1]=U_{1}\cap\cdots\cap U_{r}[-(r-1)]
\end{equation*}
as desired.
\end{proof}
\section*{An application}
\label{sec:org0145e49}
\begin{situation}
\label{situation:rank-1-point}
Let \(X = G/P\) be a projective homogeneous space of a semisimple algebraic
group. Let \(L_1, \ldots, L_r\) be a collection of \(G\)-equivariant invertible sheaves on \(X\)
such that \(L_1 \otimes \cdots \otimes L_r = \omega_X^{\vee}\). Then a complete
intersection in \(X\) with respect to \(L_1, \ldots, L_r\) is a Calabi--Yau
variety.

Let \(V = \prod \mathrm{H}^{0}(X,L_i)\). As we have mentioned, there is a
\(\mathcal{D}_V\) module \(\tau\), the tautological system, whose local solution space is
identified with a homology group, as in the main theorem. We say a point \(b \in V\) is a rank one
point, if the space of formal power series solution of \(\tau\) at \(b\) is
1-dimensional. These points are important in the classical story of ``mirror
symmetry''. As observed by
Huang--Lian--Zhu~\cite{huang-lian-zhu:period-integral-and-riemann-hilbert-correspondence},
and independently by S.~Bloch, the cohomological description of a
tautological system can be used in the search of rank one points.
\end{situation}

\begin{proposition}
\label{proposition:rank-1-calabi-yau-complete-intersection-grass}
Let the notation be as in~\S\ref{situation:rank-1-point}.
Assume further \(X = \mathrm{Grass}(d,N)\) is a Grassmannian acted by
\(G=\mathrm{SL}_{N}\). Then \(\tau\) admits a rank one point.
\end{proposition}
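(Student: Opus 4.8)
The plan is to read off the rank from the geometry of the complement and then locate the degenerate point where only the holomorphic period survives. By the main theorem, at any $b$ for which the complete intersection is smooth the full solution space of $\tau$ is identified with $\mathrm{H}_{n}(X-\bigcup_{i}Y_{b_i})$, with $n=d(N-d)$. On the smooth locus these complements form a locally trivial fibration, so this dimension is constant and equals the holonomic rank; for $X$ Fano it is strictly larger than one. Hence a rank one point in the sense of \S\ref{situation:rank-1-point} can never be a smooth point: it must be a boundary point $b_{0}\in V$ at which the complete intersection degenerates, and the defining condition is that the space of single-valued (equivalently, formal power series) solutions of $\tau$ near $b_{0}$ be one-dimensional. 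Since the system is regular holonomic, single-valued solutions are exactly the flat sections fixed by the local monodromy, so the goal becomes the construction of a $b_{0}$ at which the local monodromy of the solution local system is maximally unipotent, with a one-dimensional invariant subspace.

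\textbf{Cohomological criterion.} Following the observation of Huang--Lian--Zhu and of Bloch, I would make precise that the formal power series solutions at $b_{0}$ are counted by the monodromy invariants of $\mathrm{H}_{n}$ of the nearby complement, i.e.\ by the lowest-weight graded piece of the limiting mixed Hodge structure attached to the degeneration $b\to b_{0}$. Geometrically each independent such solution is a period $\int_{T}\Omega$ over a real $n$-torus $T$ linking a deepest, zero-dimensional transverse intersection stratum of the degenerate anticanonical divisor $\bigcup_{i}Y_{b_{i,0}}$. The task is therefore reduced to exhibiting a degenerate configuration on $\mathrm{Grass}(d,N)$ whose divisor $\bigcup_{i}Y_{b_{i,0}}$ is, near a single point, a simple normal crossing of $n$ branches, and for which exactly one such invariant torus survives in the limit.

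\textbf{Construction via a toric degeneration.} Since $\omega_{X}^{\vee}=\mathcal{O}(N)$ in the Pl\"ucker embedding, I would realize the required configuration through the flat Gelfand--Cetlin degeneration of $\mathrm{Grass}(d,N)$ to a Gorenstein toric Fano variety $X_{0}=\mathbb{P}_{\Delta}$ associated with a reflexive polytope $\Delta$. On $X_{0}$ the toric boundary is anticanonical and its complement is the open torus $(\mathbb{C}^{*})^{n}$, whose top homology has rank one and furnishes the unique monodromy-invariant cycle at the large complex structure limit of the corresponding Batyrev--Borisov family. The tautological systems fit into a flat family over the base of the degeneration; I would then transport the maximally unipotent boundary point of the toric system to a genuine point $b_{0}\in V$ of the Grassmannian, using semicontinuity of the rank of monodromy invariants to force the invariant space at $b_{0}$ to drop to exactly one. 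Together with the ever-present holomorphic period, this pins the formal solution space at $b_{0}$ to dimension one.

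\textbf{Main obstacle.} The crux is the transfer in the last step: one must verify that the single invariant $n$-torus of the toric limit deforms to $\mathrm{Grass}(d,N)$ \emph{without acquiring additional invariants}, that is, that specializing from $X_{0}$ back to $X$ does not raise the rank of monodromy invariants above one. Concretely this demands control of the limiting mixed Hodge structure of the Grassmannian complement along the Gelfand--Cetlin degeneration, and the verification that the Gelfand--Cetlin polytope $\Delta$ has a unique interior lattice point so that there is a single holomorphic period. I expect this combinatorial input, governing the count of deepest strata, to be the decisive and most delicate point of the argument.
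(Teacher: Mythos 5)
There is a genuine gap. Your construction never actually produces a point of \(V\): the Gelfand--Cetlin degeneration deforms the ambient Grassmannian itself, so the maximally unipotent point you locate lives in the parameter space of the toric limit \(X_0\), not in \(V=\prod\mathrm{H}^0(X,L_i)\). The step that would bring it back --- ``transport the maximally unipotent boundary point \ldots{} using semicontinuity'' --- is precisely the content of the proposition, and you yourself flag it as the unresolved ``main obstacle.'' As written, the argument is a plan, not a proof. There is also a conceptual detour that creates unnecessary work: the paper's main theorem identifies the solution space of \(\tau\) at \emph{every} point \(b\in V\), degenerate or not, with \(\mathrm{H}_n\big(X-\bigcup_i Y_{b_i}\big)\). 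So there is no need to pass through regular holonomicity, monodromy invariants, or limiting mixed Hodge structures of nearby fibers (an identification which is in any case delicate at non-generic points: the stalk of the solution sheaf at a singular point need not agree with the monodromy invariants of the nearby solution space). One only has to exhibit a single tuple of sections whose complement has one-dimensional middle homology.

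That is what the paper does, and it is essentially the ``genuine point on the Grassmannian'' that your toric picture is gesturing at. Writing \(L_i=\mathcal{O}_X(d_i)\) with \(\sum d_i=N\), take the anticanonical divisor \(\Pi\) cut out by the product of the \(N\) cyclic Pl\"ucker coordinates \(x_{1,\ldots,d}\,x_{2,\ldots,d+1}\cdots x_{N,1,\ldots,d-1}\), and group its factors into consecutive blocks of sizes \(d_1,\ldots,d_r\) to get sections \(b_i^{(0)}\in\mathrm{H}^0(X,\mathcal{O}_X(d_i))\) with \(\bigcup_i Y_i^{(0)}=\Pi\). By Proposition~8.6 of Huang--Lian--Zhu, \(\dim\mathrm{H}_{\dim X}(X-\Pi)=1\), and the main theorem then gives rank one at \(b^{(0)}\) directly. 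Your proposal could likely be repaired along these lines --- the divisor \(\Pi\) is exactly the one that degenerates to the toric boundary --- but the homology computation must be carried out (or cited) for the Grassmannian complement itself, not inferred from the toric limit by an unproven semicontinuity argument.
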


\begin{proof}
Write \(L_i = \mathcal{O}_X(d_i)\), where
\(\mathcal{O}_X(1)\) is the generator of the Picard group of \(X\)
(which defines the Plücker embedding of \(X\)).
It is well-known that \(\omega_X = \mathcal{O}_X(-N)\).
Consider, in terms of the Plücker
coordinate, the hypersurface \(\Pi\) defined by
\begin{equation*}
x_{1,2,\ldots,d} \cdot x_{2,3,\ldots,d+1} \cdots x_{N,1,\ldots, d-1} = 0.
\end{equation*}
Then we have \(\mathcal{O}_{X}(-\Pi) = \omega_X\).
Since \(\bigotimes L_i=\omega_X^{\vee}\), we have \(\sum d_i=N\). Define
\begin{align*}
b_1^{(0)} = x_{1,2,\ldots,d} \cdots x_{d_{1},d_{1}+1,...,d_1+d} &\in \mathrm{H}^0(X,\mathcal{O}_X(d_1)), \\
&\vdots \\
b_r^{(0)} = x_{N-d_{r}+1, \ldots, N} \cdots x_{N,1,\ldots,d-1} &\in  \mathrm{H}^0(X,\mathcal{O}_X(d_r)).
\end{align*}
We get a point \(b^{(0)} = (b^{(0)}_1,\ldots,b^{(0)}_r) \in V\). Let
\(Y_{i}^{(0)}\) be the vanishing locus of \(b_i^{(0)}\). Then \(\Pi\) equals the
union of all \(Y_{i}^{(0)}\).
By~\cite[Proposition~8.6]{huang-lian-zhu:period-integral-and-riemann-hilbert-correspondence},
\(\dim \mathrm{H}_{\dim X}(X-\Pi) = 1\). This, together with the main theorem,
implies that, up to scaling, \(\tau\) admits a unique nonzero formal solution around
\(b^{(0)} \in V\). Thus \(b^{(0)}\) is a rank one point in the parameter space
\(V\) of Calabi--Yau complete intersections in \(X = \mathrm{Grass}(d,N)\).
\end{proof}

\end{document}